\documentclass[12pt]{amsart}
\usepackage{fullpage,url,amssymb,enumerate,colonequals}

\usepackage{mathrsfs} 
\usepackage{MnSymbol}
\usepackage{extarrows}
\usepackage{lscape}
\usepackage[all,cmtip]{xy}

\usepackage[OT2,T1]{fontenc}

\usepackage{color}

\usepackage[
        colorlinks, citecolor=darkgreen,
        backref,
        pdfauthor={Alain Kraus}, 
]{hyperref}

\usepackage[french]{babel}



\newcommand{\F}{\mathbb{F}}

\newcommand{\Q}{\mathbb{Q}}
\newcommand{\R}{\mathbb{R}}
\newcommand{\Z}{\mathbb{Z}}

\newcommand{\sym}{\mathbb{S}}

\newcommand{\fq}{\mathfrak{q}}

\newcommand{\cc}{\mathfrak{c}}






\DeclareMathOperator{\Aut}{Aut}

\DeclareMathOperator{\Cl}{Cl}

\DeclareMathOperator{\Gal}{Gal}






\numberwithin{equation}{section}

\newtheorem{theorem}{Th\'eor\`eme}
\newtheorem{lemme}{Lemme}

\newtheorem{proposition}{Proposition}

\theoremstyle{definition}

\theoremstyle{remark}
\newtheorem{remark}[equation]{Remarque}

\definecolor{darkgreen}{rgb}{0,0.5,0}

\setlength{\parindent}{0mm}
\setlength{\parskip}{1ex plus 0.5ex}

\begin{document}

\title{Courbes de Fermat et principe de Hasse}

\author{Alain Kraus}

\begin{abstract}   Soit $p\geq 3$ un nombre premier. 
Une courbe de Fermat  sur $\Q$ d'exposant $p$ est d\'efinie par une \'equation de la forme $ax^p+by^p+cz^p=0$ o\`u $a,b,c$ sont
des nombres  rationnels non nuls. On  d\'emontre dans cet article  qu'il  existe  une infinit\'e de courbes de Fermat d\'efinies sur $\Q$,
d'exposant $p$, deux \`a deux non $\Q$-isomorphes, contredisant le principe de Hasse.
\end{abstract}

\keywords{Fermat curves - Counterexample to the Hasse principle.}

\subjclass[2010]{Primary  11D41} 

\date{\today}

\address{Sorbonne Universit\'e,
Institut de Math\'ematiques de Jussieu - Paris Rive Gauche,
UMR 7586 CNRS - Paris Diderot,
4 Place Jussieu, 75005 Paris, 
France}
\email{alain.kraus@imj-prg.fr}

\maketitle

\section{Introduction}
Une courbe de Fermat   $C/\Q$, d'exposant  un nombre premier $p\geq 3$,  est d\'efinie par une \'equation de la forme
\begin{equation}
\label{(1.1)} 
ax^p+by^p+cz^p=0
\end{equation}
o\`u $a,b,c$ sont des nombres  rationnels non nuls.  Rappelons que, par d\'efinition,  $C$ a une obstruction locale en un nombre premier $\ell$, si l'ensemble $C(\Q_{\ell})$ est vide. 
On dit que 
$C$ 
contredit le principe de Hasse  si $C$  n'a pas d'obstructions locales  et si $C(\Q)$ est vide. 
Les places \`a l'infini n'interviennent pas ici car $C(\R)$ n'est pas vide.

Historiquement, le premier exemple de courbe de Fermat contredisant le principe de Hasse a \'et\'e d\'ecouvert   par Selmer en 1951 (\cite{Selmer}), avec  la courbe d'\'equation
$$3x^3+4y^3+5z^3=0.$$

 L'objectif de cet article est de d\'emontrer le r\'esultat suivant.
 
 \begin{theorem} \label{T:thm1}  Soit $p\geq 3$ un nombre premier. Il existe une infinit\'e de courbes de Fermat  d\'efinies sur $\Q$, d'exposant $p$, deux \`a deux non $\Q$-isomorphes,  
contredisant le principe de~Hasse.
\end{theorem}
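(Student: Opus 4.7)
The plan is to proceed in two stages: first, exhibit a single Fermat curve of exponent $p$ over $\Q$ violating the Hasse principle; second, from this single example, construct an infinite family of pairwise non-$\Q$-isomorphic such curves by a twisting argument.

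For the first stage, the goal is to produce coefficients $a,b,c\in\Q^\times$ such that $C_0 : ax^p+by^p+cz^p=0$ satisfies $C_0(\Q_\ell)\neq\emptyset$ for every prime $\ell$ but $C_0(\Q)=\emptyset$. Local solvability at primes $\ell\nmid pabc$ is automatic by Chevalley--Warning plus Hensel lifting, so only finitely many local conditions have to be arranged by a careful choice of the valuations of $a,b,c$ at the bad primes. To rule out rational points, I would perform a descent over the cyclotomic field $K=\Q(\zeta_p)$, where $x^p+y^p$ factors as $\prod_{i=0}^{p-1}(x+\zeta_p^i y)$: a hypothetical rational point yields a system of factorisations of ideals in $\mathcal{O}_K$ whose compatibility is controlled by the class group, the unit group, and the local conditions at the primes above $p$ and $abc$. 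The classical Selmer curve $3x^3+4y^3+5z^3=0$ illustrates the feasibility for $p=3$; for general $p$, the coefficients $a,b,c$ must be chosen so that the descent obstruction does not vanish in these finite invariants of $K$, which is the technical heart of the argument.

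For the second stage, fix one such example $C_0$. For an auxiliary prime $q$, form the twisted curve $C_q : ax^p+by^p+(cq)z^p=0$. Two such curves $C_q$ and $C_{q'}$ are $\Q$-isomorphic if and only if the triples $(a,b,cq)$ and $(a,b,cq')$ are equivalent in $(\Q^\times/\Q^{\times p})^3$ modulo permutation of coordinates and common scaling by $\Q^\times$; by choosing distinct primes $q$ whose classes in $\Q^\times/\Q^{\times p}$ remain pairwise incomparable, one extracts an infinite subfamily of pairwise non-isomorphic curves. To preserve local solvability, I would impose via Chebotarev's theorem that $q\not\equiv 1\pmod p$ (so every $\Z_q$-unit is a $p$-th power, giving $C_q(\Q_q)\neq\emptyset$ by the point $z=0$) and that $q$ is a $p$-th power in $\Q_\ell^\times$ for each $\ell$ in the finite set $\{p\}\cup\{\ell:\ell\mid abc,\ \ell\equiv 1\pmod p\}$, which makes $C_q$ locally isomorphic to $C_0$ over $\Q_\ell$ there. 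Finally, to preserve the failure of the Hasse principle, I would show that the descent obstruction of the first stage applies verbatim to $C_q$, provided the class of $cq$ in a certain finite quotient of $K^\times/K^{\times p}$ coincides with that of $c$ --- another Chebotarev condition on $q$.

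The main obstacle I anticipate is the first stage: producing a single example for arbitrary $p$ and, more importantly, engineering the descent so that its obstruction is robust under the subsequent $q$-twist. Once a single curve with a suitably stable obstruction is in hand, the passage to infinitely many pairwise non-isomorphic examples is essentially a density argument combining Chebotarev with a genericity statement about triples in $(\Q^\times/\Q^{\times p})^3$.
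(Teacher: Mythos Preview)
Your proposal has a genuine gap at Stage~1, and the gap is precisely the point where the paper's argument has real content.

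You suggest performing descent over the cyclotomic field $K=\Q(\zeta_p)$, hoping the class group and units of $K$ will furnish an obstruction. But for a \emph{regular} prime $p$ the class number of $\Q(\zeta_p)$ is prime to $p$, so the classical cyclotomic descent produces no $p$-part obstruction at all; you would be back to looking for sporadic examples prime by prime, which is exactly what you want to avoid. You yourself flag this (``the main obstacle I anticipate is the first stage''), but you do not propose any mechanism that works uniformly in $p$. The paper's key idea is to descend not over $\Q(\zeta_p)$ but over $K=\Q(\sqrt[p]{r})$ for a prime $r\equiv -1\pmod p$: a theorem of Limura then guarantees $p\mid h_K$ for every $p\ge 5$, so the descent obstruction is available unconditionally. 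This choice of field, together with the auxiliary congruence $r^{p-1}\not\equiv 1\pmod{p^2}$ and $r\equiv -1$ modulo all the exceptional primes for $p$, is what makes the argument go through for arbitrary $p$.

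There is also a structural difference in Stage~2. You want to produce one curve $C_0$ and then twist by primes $q$ chosen via Chebotarev so that the obstruction ``persists''. The paper does not argue this way: it fixes $r$ as above, considers the family $C_q:x^p+qy^p+rz^p=0$, and shows directly that the set of primes $q$ for which (i) $C_q$ is everywhere locally soluble and (ii) the unique degree-one prime $\mathfrak q$ of $\Q(\sqrt[p]{r})$ above $q$ has $\mathfrak q^{e/p}$ non-principal, has positive density. Condition~(ii) is what kills global points, and it is a condition on each $q$ separately rather than a stability statement relative to a base curve. The density bound $d(S)\ge 2(p-2)/p(p-1)>1/p$ is obtained by an explicit count in $(\Z/p^2\Z)^\times$, not by an abstract Chebotarev argument. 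Your non-isomorphism step, on the other hand, is essentially the same as the paper's: it comes down to the description of $\Aut(F)$ as $(\mu_p\times\mu_p)\rtimes\mathfrak S_3$ and the resulting criterion on the coefficients modulo $p$-th powers.
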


Cet \'enonc\'e est une cons\'equence de la conjecture $(abc)$ si l'on a $p\geq 5$  (\cite{HK}, prop. 5.1). 
 
Une approche de ce r\'esultat   a \'et\'e  effectu\'ee  dans \cite{Kraus2016}. En particulier,  il a \'et\'e  \'etabli   si l'on a  $3\leq p\leq 7$.   Il a aussi \'et\'e d\'emontr\'e   pour  $p\in \lbrace 11,13,17,19\rbrace$, 
en admettant   l'hypoth\`ese de Riemann g\'en\'eralis\'ee GRH, j'avais omis de pr\'eciser ce point. En fait, 
la d\'emonstration du corollaire dans   le paragraphe I de \cite{Kraus2016}
utilise des calculs de nombres de classes qui sont valides a priori sous GRH  (voir \cite{Pari}).  
On peut  s'affranchir de cette hypoth\`ese avec  le th\'eor\`eme~\ref{T:thm2} obtenu ici, tout au moins si $p\neq 11$. 
Il en est de m\^eme si $p=11$  en rempla\c cant   dans l'\'enonc\'e de ce corollaire le couple $(11,373)$  par $(11,461)$.

\`A ma connaissance,  ce sont  les seuls r\'esultats qui \'etaient d\'ej\`a connus  concernant le th\'eor\`eme~\ref{T:thm1}. 
Signalons que l'on avait explicit\'e  dans \cite{HK}, en utilisant des m\'ethodes modulaires,  des exemples de courbes de Fermat  contredisant le principe de Hasse pour tout exposant $p<100$.

Pour tout nombre premier $p\geq 5$, on  d\'emontre ici
qu'il  existe un nombre premier $r$ (en fait une infinit\'e), 
 tel que la condition suivante soit satisfaite: il existe une infinit\'e de nombres premiers $q$ tels que
la courbe $C_q/\Q$ d'\'equation $x^p+qy^p+rz^p=0$ soit un contre-exemple au principe de Hasse (th\'eor\`eme~\ref{T:thm2}, lemme~\ref{L:lemme2} et proposition~\ref{P:prop1}). De plus, si $q$ et $q'$ sont des nombres premiers distincts, les courbes $C_q$  et $C_{q'}$ ne sont pas $\Q$-isomorphes (proposition~\ref{P:prop2} de l'Appendice). Cela \'etablit  alors  le th\'eor\`eme~\ref{T:thm1}.

Toute la suite  
est consacr\'ee \`a sa d\'emonstration. Elle utilise de fa\c con essentielle une version du  th\'eor\`eme figurant dans le paragraphe I de \cite{Kraus2016}.

D\'esormais, la lettre $p$ d\'esigne un nombre premier $\geq 5$  fix\'e.
\medskip

{\bf{Remerciements.}} Je remercie Nicolas Billerey et Nuno Freitas pour les remarques dont ils m'ont fait part pendant la r\'edaction de ce travail.

\section{Nombres premiers exceptionnels}
Selon la terminologie adopt\'ee dans \cite{HK} p. 195, un nombre premier $\ell$ est dit exceptionnel pour $p$ si les deux conditions suivantes sont remplies.

\begin{itemize}

\item[1)] On a $\ell\neq p$. 
\smallskip
\item[2)] Il existe $u,v,w\in \F_{\ell}^*$ tels que la courbe d'\'equation $ux^p+vy^p+wz^p=0$ n'ait pas de points rationnels sur $\F_{\ell}$. 
\end{itemize}

 Notons $S(p)$
l'ensemble des nombres premiers exceptionnels pour $p$. 

Le genre d'une courbe de Fermat d'exposant $p$ est $(p-1)(p-2)/2$. D'apr\`es les travaux de Weil (\cite{Weil}, p. 70, cor. 3),  si $\ell\in S(p)$  on a donc 
$$\ell<\left((p-1)(p-2)\right)^2.$$
En particulier, $S(p)$ est un ensemble  fini.

Rappelons le r\'esultat bien connu suivant. 
Soient $a,b,c$ des entiers rationnels non nuls et $C/\Q$ la courbe de Fermat d'\'equation~\eqref{(1.1)}.
\smallskip
\begin{lemme} \label{L:lemme1} Soit $\ell$ un nombre premier. Supposons que $\ell$ ne divise pas $pabc$ et  que $\ell$  n'appartienne pas \`a  $S(p)$. Alors,   $C(\Q_{\ell})$ est non vide. 
\end{lemme} 

\begin{proof}  
Notons $v_{\ell}$ la valuation standard de $\Q_{\ell}$.  D'apr\`es les   hypoth\`eses faites sur $\ell$, l'ensemble  $C(\F_{\ell})$ n'est pas vide.  
Ainsi, il existe  $x,y,z$ dans $\Z_{\ell}$ tels que l'on ait 
$$ax^p+by^p+cz^p\equiv 0 \pmod{\ell} \quad \text{et}\quad \min\left(v_{\ell}(x),v_{\ell}(y),v_{\ell}(z)\right)=0.$$
Supposons par exemple $v_{\ell}(x)=0$.  Consid\'erons le polyn\^ome $F=aX^p+by^p+cz^p\in \Z_{\ell}[X]$. On a $v_{\ell}(F(x))\geq 1$. Parce que $\ell$ ne divise  pas $pa$, on a  $v_{\ell}(F'(x))=0$. D'apr\`es le lemme
de Hensel, $F$ a donc une racine $\alpha\in \Z_{\ell}$. On a $[\alpha,y,z]\in C(\Q_{\ell})$, donc $C(\Q_{\ell})$ n'est pas vide, d'o\`u le lemme. 
\end{proof} 

\section{Densit\'e et contre-exemples au principe de Hasse \label{3}}
Soit $r$ un nombre premier satisfaisant les deux conditions suivantes:

\begin{itemize}

\item[1)]   On a $r^{p-1}\not\equiv 1 \pmod {p^2}$. 
\smallskip
\item[2)] On a $r\equiv -1 \pmod p$. 
\end{itemize}
Il existe une infinit\'e de tels nombres premiers $r$ (lemme~\ref{L:lemme2}).

 Pour tout nombre premier $q$,   notons $C_q/\Q$ la courbe de Fermat d'\'equation 
\begin{equation}
\label{(3.1)} 
x^p+q y^p+rz^p=0.
\end{equation} 
Posons 
$$K=\Q\left(\root{p}\of {r}\right).$$
Notons $O_K$ l'anneau d'entiers de $K$, $\Cl(K)$ son  groupe des classes  et $e$ l'exposant du groupe ab\'elien $\Cl(K)$. 

Par hypoth\`ese, on a $r\equiv -1 \pmod p$ et  $p\geq 5$. D'apr\`es les  r\'esultats de Limura  (\cite{Limura},  cor.  p. 158), il en r\'esulte que l'on a 
\begin{equation}
 \label{(3.2)} 
e\equiv 0 \pmod p.
\end{equation} 
(Cela  n'est pas valable pour  $p=3$, par exemple le nombre de classes de $\Q(\root{3}\of {5})$ vaut $1$.)

Soit $S$ l'ensemble des nombres premiers $q\not\equiv 1 \pmod p$  tels que la courbe $C_q/\Q$ n'ait pas  d'obstructions locales.  
D'apr\`es le th\'eor\`eme de la progression arithm\'etique de Dirichlet,  
$S$ a une densit\'e que l'on notera $d(S)$. 

Pour tout nombre premier $q\not\equiv 1 \pmod p$,  ne divisant pas l'indice $f$ de $\Z[\root{p}\of{r}]$ dans $O_K$, il existe un unique id\'eal premier de $O_K$ au-dessus de $q$ de degr\'e r\'esiduel $1$ (\cite{Kraus2016}, lemme 1).

Soit $S_0$ le sous-ensemble de $S$ form\'e des nombres premiers $q$, ne divisant pas $f$, tels que la condition suivante soit satisfaite:
si $\fq$ est l'id\'eal premier de $O_K$ au-dessus de $q$ de degr\'e r\'esiduel $1$,  l'id\'eal $\fq^{e/p}$ n'est pas principal. 
\medskip

\begin{theorem} \label{T:thm2}  Pour tout nombre premier $q\in S_0$ la courbe $C_q/\Q$ est un contre-exemple au principe de Hasse. Supposons que l'on ait  
 \begin{equation}
 \label{(3.3)} 
 d(S)>\frac{1}{p}.
\end{equation}
 Alors,  $S_0$ est infini. De plus, les courbes $C_q$ sont deux \`a deux non $\Q$-isomorphes. 
 \end{theorem}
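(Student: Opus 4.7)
La d\'emonstration comporte trois assertions. Pour la premi\`ere, je proc\'ederais par l'absurde. Supposons $q\in S_0$ et soit $(x,y,z)\in\Z^3$ une solution primitive de $x^p+qy^p+rz^p=0$. En posant $\theta=\root{p}\of{r}$, on a dans $O_K$ la factorisation
$$x^p+rz^p=(x+\theta z)\cdot M,\qquad M=\sum_{i=0}^{p-1}(-1)^{i}x^{p-1-i}(\theta z)^{i},$$
d'o\`u l'\'egalit\'e d'id\'eaux $(x+\theta z)\cdot(M)=(qy^p)$. L'objectif est d'\'etablir $(x+\theta z)=\fq\cdot\aaa^p$ comme id\'eaux de $O_K$: dans $\Cl(K)$ on a alors $[\fq]=-p[\aaa]$, d'o\`u $(e/p)[\fq]=-e[\aaa]=0$, ce qui contredit la non-principalit\'e de $\fq^{e/p}$.

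La t\^ache se ram\`ene \`a une analyse prime par prime de $v_\fP(x+\theta z)$. Des arguments \'el\'ementaires de coprimalit\'e donnent $\gcd(x,z)=1$, $r\nmid x$ et $q\nmid xz$. L'identit\'e $M\equiv p\,x^{p-1}\pmod{x+\theta z}$ force tout id\'eal premier $\fP$ divisant \`a la fois $(x+\theta z)$ et $(M)$ \`a \^etre au-dessus de $p$, $q$ ou $r$. On v\'erifie ensuite: (a) pour $\fP$ ne divisant pas $pqr$, $v_\fP(x+\theta z)\equiv 0\pmod p$; (b) pour l'unique id\'eal premier $\fq$ au-dessus de $q$ de degr\'e r\'esiduel~$1$, la bijectivit\'e de $x\mapsto x^p$ sur $\F_q^*$ impose $x\equiv -sz\pmod q$ (o\`u $s\equiv\theta\pmod\fq$), d'o\`u $v_\fq(x+\theta z)=1+p\,v_q(y)\equiv 1\pmod p$; (c) puisque $r$ est totalement ramifi\'e dans $K$, avec $\mathfrak{R}^p=(r)$, la condition $r\nmid x$ entra\^ine $v_\mathfrak{R}(x+\theta z)=0$; (d) la condition $r^{p-1}\not\equiv 1\pmod{p^2}$ implique, par Eisenstein appliqu\'e au polyn\^ome minimal de $\theta-r$, que $p$ est aussi totalement ramifi\'e dans $K$, donc $\fp^p=(p)$ et $p[\fp]=0$; combin\'e \`a $p\mid e$ (r\'esultat de Limura), on absorbe la contribution $\fp$-adique \`a $(x+\theta z)$ dans l'identit\'e du groupe des classes.

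Pour l'\'enonc\'e de densit\'e, j'invoquerais le th\'eor\`eme de Chebotarev. La condition ``$\fq^{e/p}$ principal'' correspond, via l'isomorphisme d'Artin $\Cl(K)\isom\Gal(H/K)$ (o\`u $H$ d\'esigne le corps de classes de Hilbert de $K$), \`a $[\fq]\in\Cl(K)[e/p]$. L'exposant de $\Cl(K)$ \'etant $e$, il existe un \'el\'ement d'ordre $e$, dont l'image par $\cdot(e/p)$ est un \'el\'ement non nul de $\Cl(K)[p]$; par cons\'equent
$$\left[\Cl(K):\Cl(K)[e/p]\right]\geq p.$$
Chebotarev, appliqu\'e au compos\'e de $H$ avec un corps cyclotomique codant les conditions de congruence d\'efinissant $S$, montre que la densit\'e des $q\in S$ tels que $[\fq]\in\Cl(K)[e/p]$ est major\'ee par $d(S)/p$; on en d\'eduit $d(S_0)\geq d(S)\left(1-\tfrac 1p\right)>0$ d\`es que $d(S)>1/p$, ce qui prouve l'infinitude de $S_0$. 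La non-$\Q$-isomorphie deux \`a deux des courbes $C_q$ rel\`eve de la Proposition~\ref{P:prop2} de l'Appendice.

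L'obstacle principal, \`a mon sens, est le point~(d): la valuation $v_\fp(x+\theta z)$ n'est pas n\'ecessairement divisible par $p$ lorsque $p\mid y$, et il faudra montrer que le terme correctif $(e/p)\,v_\fp(x+\theta z)\,[\fp]$ s'annule dans $\Cl(K)$. La combinaison de $p\mid e$, de $p[\fp]=0$ et d'un suivi pr\'ecis des valuations $\fp$-adiques (peut-\^etre via une descente sur la valuation $p$-adique de $y$) devrait permettre de conclure; c'est le c\oe ur technique de la preuve.
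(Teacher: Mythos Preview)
The paper's own proof is simply an invocation of the theorem in \S I of \cite{Kraus2016}, together with Proposition~\ref{P:prop2} for the non-isomorphism statement; your proposal is essentially a direct reconstruction of that cited result. The overall strategy is sound, but two steps need repair.

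\textbf{Point (d).} The absorption mechanism you propose does not work. If $p\mid y$ then, writing $x+\theta z=(x+rz)+(\theta-r)z$ with $\theta-r$ a uniformizer at $\fp$, $p\mid x+rz$ and $p\nmid z$, one finds $v_\fp(x+\theta z)=1$; the class-group relation then reads $(e/p)[\fq]=-(e/p)[\fp]$, and this vanishes only if $[\fp]=0$ or $p^2\mid e$, neither of which follows from $p\mid e$ and $p[\fp]=0$. The genuine role of the hypothesis $r^{p-1}\not\equiv 1\pmod{p^2}$ is rather to exclude the case $p\mid y$ outright: if $p\mid y$ then $p\nmid z$ and $(xz^{-1})^p\equiv -r\pmod{p^2}$, so $-r$ is a $p$-th power in $(\Z/p^2\Z)^*$, i.e.\ $r^{p-1}\equiv 1\pmod{p^2}$, a contradiction. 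Hence $p\nmid y$, the $\fp$-contribution to $(x+\theta z)$ is zero, and your ideal identity $(x+\theta z)=\fq\cdot\aaa^p$ follows cleanly.

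\textbf{Density bound.} Your inequality $d(S\setminus S_0)\leq d(S)/p$ is too strong: it would give $d(S_0)\geq d(S)(1-1/p)>0$ for \emph{every} $d(S)>0$, rendering the hypothesis $d(S)>1/p$ superfluous --- a sign that the bound cannot be right. Equidistribution of $[\fq]$ in $\Cl(K)$ \emph{conditional on} $q\in S$ is not clear, since $S$ is not a priori cut out by congruence conditions on $q$ alone. What one proves instead, by counting degree-$1$ prime ideals in the subextension $H'/K$ of the Hilbert class field with $\Gal(H'/K)\cong\Cl(K)/\Cl(K)[e/p]$, is the unconditional bound
\[
d\bigl(\{q:\fq^{e/p}\ \text{principal}\}\bigr)\;\leq\;\frac{1}{[H':K]}\;\leq\;\frac{1}{p}.
\]
Then $d(S_0)\geq d(S)-1/p>0$, and this is precisely where the hypothesis $d(S)>1/p$ enters.
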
 

\begin{proof} C'est une cons\'equence directe du th\'eor\`eme \'enonc\'e dans le paragraphe I de \cite{Kraus2016}.  En effet, on a $r^{p-1}\not\equiv 1 \pmod{p^2}$ (condition 1) et $p$ divise l'ordre de $\Cl(K)$  (condition~\eqref{(3.2)}). Par ailleurs, d'apr\`es  l'in\'egalit\'e~\eqref{(3.3)}, 
l'hypoth\`ese relative \`a la  densit\'e intervenant  dans  le   th\'eor\`eme de {\it{loc. cit.}} est satisfaite. Le fait que les courbes $C_q$ soient deux \`a deux non $\Q$-isomorphes r\'esulte de  la proposition~\ref{P:prop2} de l'Appendice, d'o\`u le r\'esultat.
\end{proof} 

\begin{remark}  La premi\`ere assertion du  th\'eor\`eme~\ref{T:thm2} permet  pour des petits nombres premiers $p$, disons $p\leq 23$,  sous l'hypoth\`ese de Riemann g\'en\'eralis\'ee GRH, 
d'expliciter des  courbes de Fermat contredisant le principe de Hasse. Pour cela,  il  faut  conna\^itre $e$ et \^etre en mesure de d\'eterminer des nombres premiers de $S_0$, ce qui  n\'ecessite  souvent d'admettre  GRH. Pour  $p=5$ ou  $p=7$,  et si $r$ n'est trop grand,  il est assez simple   d'expliciter   inconditionnellement de telles courbes de Fermat.
\`A titre indicatif, les courbes  d'\'equations
$$x^5+7y^5+29z^5=0\quad \text{et}\quad x^7+19y^7+13z^7=0$$
 sont des contre-exemples au principe de Hasse.  Ils  sont analogues \`a ceux  figurant dans le corollaire 6.4.11 de \cite{Cohen}. On a la m\^eme conclusion sous GRH, avec celle
d'\'equation
$$x^{19}+7y^{19}+37z^{19}=0.$$
Cela \'etant, avec  des arguments modulaires, on peut   conclure pour cette courbe sans GRH. 
\end{remark} 

\begin{remark} La condition~\eqref{(3.2)} 
 est encore satisfaite  avec  un nombre premier $r$ congru \`a $1$ modulo $p$  (\cite{Limura},  cor.  p. 158).  Par ailleurs, si $q$ est un nombre premier distinct de $r$ et $p$, 
 la courbe $C_q$ d'\'equation~\eqref{(3.1)} n'a pas d'obstruction locale en $r$ si et seulement si la classe de $q$ est une puissance $p$-i\`eme dans $\F_r^*$ (voir la d\'emonstration du lemme~\ref{L:lemme3}).  Cette condition est r\'ealis\'ee si $r\equiv -1 \pmod p$, mais pas n\'ecessairement si $r\equiv 1 \pmod p$. Afin d'obtenir une densit\'e $d(S)$ assez grande pour que l'on ait l'in\'egalit\'e~\eqref{(3.3)}, 
cela justifie le fait que l'on ait impos\'e la condition  $r\equiv -1 \pmod p$. 
\end{remark} 
\section{choix du nombre premier $r$ \label{4}}
Compte tenu du th\'eor\`eme~\ref{T:thm2},  il s'agit de d\'emontrer qu'il existe  un nombre premier $r$  v\'erifiant les conditions 1 et 2  du paragraphe~\ref{3}, pour lequel l'in\'egalit\'e~\eqref{(3.3)}  soit r\'ealis\'ee.
\subsection{L'entier $N$} Posons  
\begin{equation}
\label{(4.1)}
N=p\prod_{\ell \in S(p)}\ell.
\end{equation}
\begin{lemme} \label{L:lemme2}   Il existe une infinit\'e de nombres premiers $r$ tels que l'on ait $r\equiv -1 \pmod N$ et $r^{p-1}\not\equiv 1 \pmod{p^2}$. 
\end{lemme} 
\begin{proof} 
Le groupe $(\Z/p^2\Z)^*$ \'etant cyclique d'ordre $p(p-1)$, il poss\`ede exactement  $p-1$ \'el\'ements  $x$  tels que $x^{p-1}=1$.  Par ailleurs, il  y a $p$ \'el\'ements dans $(\Z/p^2\Z)^*$  dont la r\'eduction modulo $p$ vaut $-1$, \`a savoir les classes d'entiers  de la forme $-1+\lambda p$ avec $0\leq \lambda\leq p-1$.  Il y a donc une telle classe $\cc$  telle que $\cc^{p-1}\neq 1$.  Ainsi, il  existe   $a\in \Z$ tel que  l'on ait
$$a\equiv -1 \pmod p\quad \text{et}\quad a^{p-1}\not\equiv 1 \pmod {p^2}.$$ 
Par d\'efinition de l'ensemble $S(p)$, 
l'entier $N/p$ est premier \`a $p$.  Il  existe donc $c\in \Z$ tel que  
$$c\equiv -1\pmod {N/p}\quad \text{et} \quad c\equiv a \pmod {p^2}.$$
 On a  en particulier $c\equiv -1 \pmod p$. Par suite,  on a 
 $$c\equiv -1 \pmod N\quad \text{et}\quad c^{p-1}\not\equiv 1 \pmod{p^2}.$$
L'entier $c$ est premier avec $Np$ (qui est multiple de $p^2$). D'apr\`es le th\'eor\`eme de Dirichlet,  il existe donc une infinit\'e de nombres premiers $r$ tels que l'on ait $$r\equiv c \pmod{Np}.$$ Pour un tel nombre premier $r$, on a  ainsi
$$r\equiv c \pmod N\quad \text{et}\quad r\equiv c\pmod {p^2}.$$
On obtient  les conditions $r\equiv -1\pmod N$ et $r^{p-1}\not\equiv 1 \pmod{p^2}$, d'o\`u  le lemme.
\end{proof} 
\subsection{Choix de $r$} Fixons d\'esormais un nombre premier  $r$ tel que l'on ait 
\begin{equation}
\label{(4.2)}
r\equiv -1 \pmod N\quad \text{et}\quad r^{p-1}\not\equiv 1 \pmod{p^2}.
\end{equation}

Cela est possible d'apr\`es le lemme~\ref{L:lemme2}. La suite de la d\'emonstration du th\'eor\`eme~\ref{T:thm1} consiste \`a \'etablir qu'avec ce choix de $r$, on obtient  l'in\'egalit\'e \eqref{(3.3)}. 
\medskip
\begin{lemme} \label{L:lemme3}
Soit $q$ un nombre premier, distinct de $p$,  tel que $q\not\equiv 1 \pmod p$.  La courbe  $C_q/\Q$ d'\'equation~\eqref{(3.1)} n'a pas d'obstructions locales  en dehors de $p$. 
\end{lemme} 

\begin{proof} 
Soit $\ell\neq p$ un nombre premier. Il s'agit de montrer que $C_q(\Q_{\ell})$ n'est pas vide. D'apr\`es le lemme~\ref{L:lemme1}, tel est le cas si $\ell$ n'est pas dans $S(p)\cup \lbrace q,r\rbrace$.

Si $q=r$ le point $[0,1,-1]$ est dans $C_q(\Q)$. On peut donc supposer $q\neq r$. 

Supposons $\ell\in S(p)$. Alors, $\ell$ divise l'entier $N$ d\'efini par l'\'egalit\'e~\eqref{(4.1)}. D'apr\`es la condition~\eqref{(4.2)}, on a $r\equiv -1 \pmod \ell$ i.e. $r\equiv (-1)^p \pmod{\ell}$. Ainsi, $r$ est une puissance $p$-i\`eme dans $\F_{\ell}^*$. On a $\ell\neq p$, donc d'apr\`es le  lemme de Hensel   $r$ est  une puissance $p$-i\`eme dans $\Q_{\ell}$. En posant $r=t^p$ avec $t\in \Q_{\ell}$,  le point $[-t,0,1]$ est dans $C_q(\Q_{\ell})$, d'o\`u l'assertion dans ce cas.

Supposons $\ell=q$. Par hypoth\`ese,  on a $q\not\equiv 1 \pmod p$. Le morphisme $\F_q^*\to \F_q^*$ qui \`a $x$ associe $x^p$  est donc bijectif. On a $q\neq r$ donc $r$ est une puissance $p$-i\`eme dans $\F_q^*$. Parce que l'on a  $q\neq p$, le lemme de Hensel entra\^ine  alors que    $r$ une puissance $p$-i\`eme dans $\Q_{q}$ et donc que $C_q(\Q_q)$ est    non vide.

Supposons  $\ell=r$. 
On a $r\equiv -1 \pmod N$ d'o\`u  $r\not\equiv 1 \pmod p$. On a $q\neq r$ et $p\neq r$. Comme ci-dessus,   on en d\'eduit que $q$ est une puissance $p$-i\`eme dans $\Q_r$, puis que $C_q(\Q_r)$ est non vide, d'o\`u le lemme.
\end{proof}  
\section{Minoration de $d(S)$ - Fin de la d\'emonstration \label{5}}
Le nombre premier $r$ v\'erifiant la condition~\eqref{(4.2)}, d\'emontrons l'\'enonc\'e suivant.

\begin{proposition} \label{P:prop1} On a l'in\'egalit\'e
\begin{equation}
\label{(5.1)} 
d(S)\geq \frac{2(p-2)}{p(p-1)}.
\end{equation}
\end{proposition}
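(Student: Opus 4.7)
The plan is to combine Lemma~\ref{L:lemme3}, which kills all local obstructions away from $p$, with a direct construction of two disjoint mod-$p^2$ families of primes $q$ for which $C_q(\Q_p)$ is manifestly nonempty.

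By Lemma~\ref{L:lemme3}, for any prime $q\neq p$ with $q\not\equiv 1\pmod p$, the curve $C_q/\Q$ has no local obstruction outside $p$; hence $q$ lies in $S$ as soon as $C_q(\Q_p)\neq\emptyset$, and it suffices to produce enough such $q$.

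I would rely on two elementary sufficient conditions for local solvability at $p$. First, if $q\in(\Q_p^*)^p$, say $q=s^p$ with $s\in\Z_p^*$, then the projective point $[-s,1,0]$ lies on $C_q$, since $p$ is odd. Second, if $q$ and $r$ represent the same class in $\Q_p^*/(\Q_p^*)^p$, say $r=qt^p$ with $t\in\Z_p^*$, then $[0,-t,1]$ lies on $C_q$. Both conditions depend only on $q\bmod p^2$: concretely the first reads $q^{p-1}\equiv 1\pmod{p^2}$, the second $q^{p-1}\equiv r^{p-1}\pmod{p^2}$. The assumption $r^{p-1}\not\equiv 1\pmod{p^2}$ from~\eqref{(4.2)} is precisely what makes the two families disjoint.

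To count, I would work inside the cyclic group $(\Z/p^2\Z)^*$ of order $p(p-1)$. The set $\{\bar q:\bar q^{p-1}=1\}$ is its unique subgroup of order $p-1$, namely the image of the Teichm\"uller lift $\omega:\F_p^*\hookrightarrow(\Z/p^2\Z)^*$; reduction modulo $p$ restricts to a bijection from this subgroup onto $\F_p^*$. So exactly one of its $p-1$ classes (the class of $1$) is $\equiv 1\pmod p$, leaving $p-2$ admissible classes. The coset $\{\bar q^{p-1}=r^{p-1}\}$ projects bijectively onto $\F_p^*$ in the same way and contributes another $p-2$ classes. By Dirichlet's theorem on primes in arithmetic progressions each invertible class modulo $p^2$ supports primes of density $1/\varphi(p^2)=1/(p(p-1))$, and summing yields $d(S)\geq 2(p-2)/(p(p-1))$.

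The main obstacle, I expect, is the mod-$p^2$ bookkeeping: one must verify, via the expansion $(1+pa)^p\equiv 1+p^2 a\pmod{p^3}$, that $q\in(\Z_p^*)^p$ is genuinely a congruence condition modulo $p^2$ (equivalent to $q^{p-1}\equiv 1\pmod{p^2}$); identify that condition with the Teichm\"uller subgroup inside $(\Z/p^2\Z)^*$; and check that reduction modulo $p$ is bijective on this subgroup and on each of its cosets, so that exactly one class, not zero or two, is eliminated by the constraint $q\not\equiv 1\pmod p$.
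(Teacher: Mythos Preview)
Your proposal is correct and follows essentially the same route as the paper: reduce to the obstruction at $p$ via Lemma~\ref{L:lemme3}, exhibit the two sufficient conditions ($q$ or $q/r$ a $p$-th power in $\Q_p$), translate them into the congruences $q^{p-1}\equiv 1$ or $r^{p-1}\pmod{p^2}$, count $p-2$ admissible classes in each of the two disjoint cosets of the order-$(p-1)$ subgroup of $(\Z/p^2\Z)^*$, and apply Dirichlet. The only cosmetic differences are that the paper cites Serre for the equivalence ``$p$-th power in $\Z_p^*$ $\Leftrightarrow$ $p$-th power mod $p^2$'' where you invoke the binomial expansion, and the paper identifies the excluded class in the $r$-coset explicitly (as $1-\mu p$ with $r^{p-1}\equiv 1+\mu p$) where you argue more structurally via the Teichm\"uller bijection.
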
 

\begin{proof} D'apr\`es le lemme~\ref{L:lemme3}, pour tout nombre premier  $q\not\equiv 1 \pmod {p}$, la seule obstruction locale possible de la courbe $C_q$ est en $p$. Par suite, 
$d(S)$ est la densit\'e des nombres premiers $q\not\equiv 1 \pmod p$ tels que $C_q(\Q_p)$ soit non vide.

Soit $A$ l'ensemble des nombres premiers $q\not\equiv 1\pmod p$ tels que $q$ ou  $q/r$ soit  une puissance $p$-i\`eme dans $\Q_p$. Pour tout $q\in A$, l'ensemble $C_q(\Q_p)$  est non vide.

Soit  $B$ l'ensemble des nombres premiers $q\not\equiv 1\pmod p$  tels que
$q$ ou   $q/r$ soit  une puissance $p$-i\`eme  modulo $p^2$. Une unit\'e de $\Z_p$ est une puissance $p$-i\`eme dans $\Q_p$ si et seulement si  c'est une puissance $p$-i\`eme modulo $p^2$
(cf. \cite{Serre1},  p. 219, prop. 9). Il en r\'esulte que l'on a 
$$A=B.$$ 
Notons $d(B)$ la densit\'e de $B$. On d\'eduit   ce qui pr\'ec\`ede que l'on a 
$$d(S)\geq d(B).$$
D\'emontrons alors que l'on~a 
\begin{equation}
\label{(5.2)} 
d(B)=\frac{2(p-2)}{p(p-1)},
\end{equation}
ce qui  \'etablira  le r\'esultat.



Le sous-groupe des puissances $p$-i\`emes de  $(\Z/p^2\Z)^*$ est form\'e des classes  $x$  tels que $x^{p-1}=1$. Il  est d'ordre $p-1$. 

Soit $\cc\in (\Z/p^2\Z)^*$  tel que $\cc$ soit  une puissance $p$-i\`eme et que la r\'eduction de $\cc$ modulo $p$ soit  la classe de $1$ dans $(\Z/p\Z)^*$.
V\'erifions que $\cc$ est la classe de $1$ modulo $p^2$. Il existe  $\lambda\in \Z$ tel que  $1+\lambda p$ soit un  repr\'esentant de $\cc$. On a $\cc^{p-1}=1$ i.e. on a la congruence
$$(1+\lambda p)^p\equiv 1+\lambda p\pmod{p^2}.$$ 
Or on  a $(1+\lambda p)^p\equiv 1 \pmod {p^2}$,   d'o\`u  $\lambda\equiv 0 \pmod p$ et l'assertion. 
Ainsi, il  y a exactement $p-2$ classes  dans $(\Z/p^2\Z)^*$ qui sont des puissances $p$-i\`emes et dont la r\'eduction modulo $p$ ne soit  pas  la classe de $1$. 
 
Notons  $\overline r$ la classe de $r$ modulo $p^2$. Il y a $p-1$ classes $\cc$ dans $(\Z/p^2\Z)^*$ telles que $\cc/\overline r$ soit une puissance $p$-i\`eme.  V\'erifions comme ci-dessus 
qu'il existe  une unique classe $\cc\in (\Z/p^2\Z)^*$  telle que  
 $(\cc/\overline r)^{p-1}=1$ et   $\cc \pmod p=1$. 
Il existe $\mu\in \Z$  tel que $r^{p-1}\equiv 1 +\mu p\pmod{p^2}$. 
Soient $\cc$ une telle classe  et  $\lambda\in \Z$ tels que $1+\lambda p$  soit un repr\'esentant de $\cc$. On a alors 
$$(1+\lambda p)^p\equiv (1+\lambda p)r^{p-1}\pmod {p^2}.$$
On obtient $\lambda\equiv -\mu \pmod p$ i.e. $\cc$ est la classe de $1-\mu p$ modulo $p^2$. Inversement, on constate que l'on a $(1-\mu p)^{p-1}\equiv r^{p-1}\pmod {p^2}$, d'o\`u l'assertion. Il y a donc  $p-2$ classes $\cc\in (\Z/p^2\Z)^*$ telles que $\cc/\overline r$ soit une puissance $p$-i\`eme et que $\cc \pmod p\neq 1$. 
  
Par ailleurs, on a $r^{p-1}\not\equiv 1 \pmod {p^2}$, donc $\overline r$ n'est pas une puissance $p$-i\`eme dans $(\Z/p^2\Z)^*$.  Il en r\'esulte qu'il y a   $2(p-2)$ classes $\cc$ dans $(\Z/p^2\Z)^*$, distinctes deux \`a deux,  telles que $\cc$ ou $\cc/\overline r$ soit une puissance $p$-i\`eme et que $\cc \pmod p\neq 1$. D'apr\`es le th\'eor\`eme de Dirichlet, $\varphi$ \'etant la fonction indicatrice d'Euler, il y a une densit\'e  $1/\varphi(p^2)$ de nombres premiers dans chacune de ces classes. L'ensemble $B$ \'etant form\'e  des nombres premiers appartenant  \`a la r\'eunion de  ces $2(p-2)$ classes de $(\Z/p^2\Z)^*$, on en d\'eduit   l'\'egalit\'e~\eqref{(5.2)},  d'o\`u  la proposition.
\end{proof} 

\subsection{Fin de la d\'emonstration}  Le cas des courbes de Fermat d'exposant $3$  est  trait\'e  dans  \cite{Kraus2016}.
Pour $p\geq 5$, d'apr\`es la proposition~\ref{P:prop1}, on a en particulier
$$d(S)>\frac{1}{p}.$$
Le th\'eor\`eme~\ref{T:thm2}  entra\^ine alors le r\'esultat, d'o\`u le th\'eor\`eme~\ref{T:thm1}.

\section{Appendice}
Son objectif est d'\'etablir  l'\'enonc\'e suivant,  utilis\'e dans la d\'emonstration  du th\'eor\`eme~\ref{T:thm2}. 

\begin{proposition}  \label{P:prop2}  Soient $q$ et $q'$ des nombres premiers distincts.  Les courbes $C_q$ et $C_{q'}$ ne sont pas  $\Q$-isomorphes.
\end{proposition}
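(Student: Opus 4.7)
Le plan est de procéder par l'absurde en supposant qu'il existe un $\Q$-isomorphisme $\phi : C_q \to C_{q'}$. La première étape consiste à montrer que $\phi$ provient d'une transformation projective de $\PP^2$ définie sur $\Q$. Cela résulte d'un théorème classique sur les courbes planes lisses: toute courbe plane lisse de degré $d \geq 4$ admet un unique $g^2_d$, de sorte que tout isomorphisme entre deux telles courbes est induit par un élément de $\PGL_3$ défini sur le corps de base. Ici $d = p \geq 5$ et cette hypothèse est bien satisfaite.

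La deuxième étape consiste à se ramener au groupe d'automorphismes de la courbe de Fermat standard $F_p$ d'équation $u_1^p + u_2^p + u_3^p = 0$. Sur $\Qbar$, les applications
$$T_q : (x_1, x_2, x_3) \mapsto (x_1, q^{1/p} x_2, r^{1/p} x_3) \quad\text{et}\quad T_{q'} : (x_1, x_2, x_3) \mapsto (x_1, (q')^{1/p} x_2, r^{1/p} x_3)$$
définissent des isomorphismes $C_q \to F_p$ et $C_{q'} \to F_p$. La composition $T_{q'} \circ \phi \circ T_q^{-1}$ est donc un $\Qbar$-automorphisme de $F_p$. Pour $p \geq 4$, on dispose de la description classique $\Aut(F_p) = H \rtimes S_3$, où $H$ désigne le groupe des matrices diagonales à coefficients racines $p$-ièmes de l'unité (modulo scalaires). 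Il s'ensuit que $\phi$ s'écrit nécessairement sous la forme $(x_1, x_2, x_3) \mapsto (t_1 x_{\pi(1)}, t_2 x_{\pi(2)}, t_3 x_{\pi(3)})$ pour une certaine permutation $\pi \in S_3$ et des coefficients $t_i \in \Qbar^*$ vérifiant $t_i^p = c_{\pi(i)}/c'_i$, où $(c_1, c_2, c_3) = (1, q, r)$ et $(c'_1, c'_2, c'_3) = (1, q', r)$.

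La $\Q$-rationalité de $\phi$ (à un scalaire commun près) équivaut alors à l'existence d'un $\lambda \in \Q^*$ tel que $c_{\pi(i)} \equiv \lambda c'_i \pmod{(\Q^*)^p}$ pour tout $i \in \{1, 2, 3\}$. Il reste à examiner chacune des six permutations $\pi \in S_3$ et à constater que cette condition oblige un produit de la forme $q/q'$, $qq'$, $r^2$, $qr$ ou $rq'$ à appartenir à $(\Q^*)^p$. Comme $q, q', r$ sont des nombres premiers avec $q \neq q'$ et $p \geq 5$, la valuation au premier approprié d'un tel produit vaut $\pm 1$ ou $\pm 2$, donc n'est pas divisible par $p$. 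On aboutit ainsi à une contradiction dans chacun des six cas, y compris les cas particuliers où $q$ ou $q'$ coïnciderait avec $p$ ou avec $r$.

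L'étape la plus délicate me paraît être la réduction de $\phi$ à une application $\Q$-linéaire de $\PP^2$, combinée à la description explicite $\Aut(F_p) = H \rtimes S_3$; une fois ces ingrédients en place, la discussion cas par cas est une vérification arithmétique essentiellement mécanique.
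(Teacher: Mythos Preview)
Your proof is correct and follows the same overall arc as the paper: pass to the automorphism group $\Aut(F_p)\simeq(\mu_p\times\mu_p)\rtimes S_3$ of the standard Fermat curve, carry out a case analysis over the six elements of $S_3$, and conclude by a valuation argument on the resulting $p$-th power conditions.

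The difference lies in how the reduction is set up. The paper establishes a general criterion (its Th\'eor\`eme~\ref{T:thm3}) via the theory of twists: it realises $C_q$ and $C_{q'}$ as twists of $F_p$, identifies their $K$-isomorphism classes with cocycle classes in $H^1(G_K,\Aut(F_p))$, and unwinds the cohomology relation $\Psi(\sigma)\alpha={}^{\sigma}\alpha\,\Psi'(\sigma)$ into the six pairs of $p$-th power conditions. You instead invoke the classical fact that a smooth plane curve of degree $d\geq 4$ carries a unique $g^2_d$, so that any $\Q$-isomorphism $C_q\to C_{q'}$ is already induced by an element of $\PGL_3(\Q)$; conjugating by the diagonal maps $T_q,T_{q'}$ then forces this element to be monomial with rational entries. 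Both routes land on the same six arithmetic conditions and the same valuation contradiction. The paper's cohomological formulation has the advantage of producing a clean \emph{necessary and sufficient} criterion for $K$-isomorphism of arbitrary Fermat twists over any field of characteristic~$0$; your linear-systems route is more geometric, avoids Galois cohomology altogether, and is arguably more direct for the specific proposition at hand.
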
 

On va d\'emontrer un r\'esultat  g\'en\'eral qui entra\^ine  directement la proposition~\ref{P:prop2}.

Soient $p\geq 5$ un nombre premier et $K$ un corps   de caract\'eristique $0$. Soient $b,c$ et $b',c'$ des \'elements non nuls de $K$. Notons respectivement  $C$ et $C'$ les courbes de Fermat d'\'equations 
$$x^p+by^p+cz^p=0\quad \text{et}\quad x^p+b'y^p+c'z^p=0.$$
On obtient   une condition n\'ecessaire et suffisante pour que $C$ et $C'$ soient $K$-isomorphes. Je  n'ai pas trouv\' e de r\'ef\'erence \`a ce sujet dans la litt\'erature.
\medskip
\begin{theorem}  \label{T:thm3} Les courbes $C$ et $C'$ sont $K$-isomorphes si et seulement si l'un des couples  suivants  appartient \`a $K^{*p}\times K^{*p}$:
$$\left(\frac{b}{b'},\frac{c}{c'}\right),\quad \left(\frac{b}{c'},\frac{b'}{c}\right),\quad  \left(cb',\frac{b}{cc'}\right),\quad \left(bc', \frac{b'}{cc'}\right),\quad  \left(\frac{bc'}{c}, \frac{cb'}{c'}\right),  \quad \left(cc',\frac{cb'}{b}\right).$$
\end{theorem}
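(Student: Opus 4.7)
The plan is to classify the linear transformations of $\PP^2$ that carry $F_C := x^p+by^p+cz^p$ to a scalar multiple of $F_{C'} := x^p+b'y^p+c'z^p$. I would invoke the classical result (e.g.\ Chang's theorem for smooth plane curves of degree $d\geq 4$: the complete series $|\mathcal{O}_C(1)|$ is the unique $g^2_d$ on $C$) asserting that every $K$-isomorphism $C\to C'$ is induced by some $M\in\PGL_3(K)$. Thus $C$ and $C'$ are $K$-isomorphic iff there exist $M\in\GL_3(K)$ and $\lambda\in K^*$ with $F_{C'}(M(x,y,z))=\lambda F_C(x,y,z)$.

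The first step would be to show that such an $M$ is necessarily monomial. Fix $p$-th roots $\beta,\gamma,\beta',\gamma'\in\Kbar$ of $b,c,b',c'$; the diagonal matrices $D=\mathrm{diag}(1,\beta,\gamma)$ and $D'=\mathrm{diag}(1,\beta',\gamma')$ give $\Kbar$-isomorphisms from the standard Fermat curve $F_p\colon X^p+Y^p+Z^p=0$ to $C$ and $C'$ respectively, so any $\Kbar$-isomorphism $C\to C'$ takes the form $D'\circ\psi\circ D^{-1}$ for some $\psi\in\Aut(F_p/\Kbar)$. Classically, $\Aut(F_p/\Kbar)$ embeds in $\PGL_3(\Kbar)$ as the monomial subgroup with entries in $\mu_p$ (isomorphic to $\mu_p^2\rtimes\mathfrak{S}_3$). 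Hence $M$ is monomial in $\PGL_3(\Kbar)$; since the zero-pattern of a monomial matrix is invariant under scalar multiplication and $M\in\GL_3(K)$, one may represent $M$ by a monomial matrix with entries in $K$. Concretely, there exist $\sigma\in\mathfrak{S}_3$ and $\alpha_1,\alpha_2,\alpha_3\in K^*$ such that $M(x_1,x_2,x_3)=(\alpha_1 x_{\sigma(1)},\alpha_2 x_{\sigma(2)},\alpha_3 x_{\sigma(3)})$.

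The second step would be a finite case analysis. For each of the six permutations $\sigma\in\mathfrak{S}_3$, substituting $M$ into $F_{C'}(M(x,y,z))=\lambda F_C(x,y,z)$ and matching the coefficients of $x^p,y^p,z^p$ produces three equations in $\lambda,\alpha_1^p,\alpha_2^p,\alpha_3^p$; eliminating $\lambda$ yields two independent conditions of the form $(\alpha_i/\alpha_j)^p=\text{(rational expression in }b,c,b',c'\text{)}$, and hence forces a pair of elements of $K^*$ to lie in $K^{*p}$. Running $\sigma$ through $\mathrm{id},(12),(23),(13),(123),(132)$ yields exactly the six pairs listed in the statement, up to small manipulations using that one coordinate is already a $p$-th power (e.g., when $cc'\in K^{*p}$, the identity $(bc'/b')(cb'/b)=cc'$ shows that $bc'/b'\in K^{*p}$ iff $cb'/b\in K^{*p}$). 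The converse is immediate: given one of the six pairs in $K^{*p}\times K^{*p}$, one extracts suitable $\alpha_i\in K^*$ and assembles the corresponding monomial $M$, reversing the coefficient calculation.

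The main technical obstacle is concentrated in the first step: it relies on two classical but non-trivial ingredients---the linearity of isomorphisms between smooth plane curves of degree $\geq 4$, and the explicit description of $\Aut(F_p/\Kbar)$ as the monomial subgroup of $\PGL_3(\Kbar)$---together with the mild descent observation that rescaling preserves the zero-pattern, so that a monomial representative over $\Kbar$ gives a monomial one over $K$. Once this reduction is granted, the classification reduces to the six-case bookkeeping above.
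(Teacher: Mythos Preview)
Your proposal is correct and takes a genuinely different route from the paper. The paper argues via Galois cohomology: both $C$ and $C'$ are $\overline{K}$-twists of the standard Fermat curve $F$, so their $K$-isomorphism classes correspond to cocycle classes in $H^1(G_K,\Aut(F))$; the explicit cocycles $\Psi,\Psi'$ built from the diagonal isomorphisms $\varphi,\varphi'$ are then compared via the cohomologous condition $\Psi(\sigma)\alpha={}^{\sigma}\alpha\,\Psi'(\sigma)$ for some $\alpha\in\Aut(F)\simeq\mu_p^2\rtimes\mathfrak{S}_3$, and a case analysis on $\alpha$ (tracking the cyclotomic character) shows that certain products of $\beta,\gamma,\beta',\gamma'$ and roots of unity are $G_K$-fixed, hence in $K^*$. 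You instead invoke the linearity of isomorphisms of smooth plane curves of degree $\geq 4$ to get $M\in\PGL_3(K)$, conjugate by the diagonal matrices to land in $\Aut(F_p/\overline{K})$, observe that the monomial zero-pattern survives the descent to $K$, and then compare coefficients in $F_{C'}(M\cdot)=\lambda F_C$ directly. Both proofs hinge on the same structural input---the description of $\Aut(F_p)$ as the monomial group (Tzermias)---and both end in the same six-case bookkeeping; what differs is the mechanism for extracting the $p$-th power conditions. Your approach is more elementary in that it bypasses the twisting/cohomology formalism, at the cost of importing the linearity theorem for plane-curve isomorphisms; the paper's cohomological approach avoids that citation but is heavier in machinery. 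Your descent step (a monomial class in $\PGL_3(\overline{K})$ with a $K$-rational representative has a monomial $K$-rational representative) is the one place worth stating carefully, and you have identified it correctly.
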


\begin{proof}
On v\'erifie    directement que si l'un de ces couples est dans $K^{*p}\times K^{*p}$, alors $C$ et $C'$ sont isomorphes sur $K$.

Inversement, supposons que $C$ et $C'$ soient $K$-isomorphes. Notons $F$ la courbe de Fermat  d'\'equation
$$x^p+y^p+z^p=0.$$
 Soient $\beta$ et $\gamma$ des racines $p$-i\`emes 
de $b$ et $c$ dans une cl\^oture alg\'ebrique $\overline K$ de $K$. Soient $\beta'$ et $\gamma'$ les analogues de $\beta$ et $\gamma$ pour $b'$ et $c'$. Les courbes $C$ et $C'$ sont isomorphes \`a $F$ sur $\overline K$ via les morphismes
$\varphi : C\to F$ et $\varphi' : C'\to F$ d\'efinis par
$$\varphi([x,y,z])=[x,\beta y,\gamma z]\quad \text{et}\quad \varphi'([x,y,z])=[x,\beta' y,\gamma' z].$$

Soient $\mu_p$ le groupe des racines $p$-i\`emes de l'unit\'e et $\zeta$ un g\'en\'erateur de $\mu_p$. Soit $\Aut(F)$ le groupe des automorphismes de $F$. Il est d'ordre $6p^2$. Plus pr\'ecis\'ement,  $\Aut(F)$ est  engendr\'e par l'ensemble des morphismes
$\alpha_{j,j'}$ tels que 
$$\alpha_{j,j'}([x,y,z])=[\zeta^j x,\zeta^{j'}y,z]\quad \text{pour}\quad 0\leq j,j'\leq p-1,$$
qui est  un sous-groupe  distingu\'e isomorphe  \`a $\mu_p\times \mu_p$,  et par les automorphismes de permutation qui forment  un groupe isomorphe \`a  $\sym_3$ (\cite{TZ}, p. 173, Theorem).

Posons $G_K=\Gal(\overline K/K)$. Les classes de $K$-isomorphisme de $C$ et $C'$  correspondent \`a deux classes de cocycle de l'ensemble de cohomologie $H^1(G_K,\Aut(F))$ (\cite{Silverman}, p. 318, Theorem 2.2).
Soient 
$$\Psi, \Psi' : G_K\to \Aut(F)$$
les applications d\'efinies pour tout $\sigma\in G_K$ par les \'egalit\'es
$$\Psi(\sigma)={^{\sigma}}\varphi \varphi^{-1}\quad \text{et}\quad \Psi'(\sigma)={^{\sigma}}\varphi' \varphi'^{-1}.$$
Ce sont deux $1$-cocycles  dont les classes correspondent respectivement \`a celles de $C$ et $C'$ (cf. {\it{loc. cit.}}).
Pour tous $\sigma\in G_K$ et $[x,y,z]\in F$, on a 

\begin{equation}
 \label{(6.1)} 
\Psi(\sigma)([x,y,z])=\left[x, \frac{\sigma(\beta)}{\beta} y, \frac{\sigma(\gamma)}{\gamma} z\right]\quad \text{et}\quad \Psi'(\sigma)([x,y,z])=\left[x, \frac{\sigma(\beta')}{\beta'} y, \frac{\sigma(\gamma')}{\gamma'} z\right].
\end{equation}

Par hypoth\`ese, $\Psi$ et $\Psi'$ sont cohomologues. Il  existe  donc $\alpha\in \Aut(F)$ tel que l'on ait ({\it{loc. cit.}}, p. 421)
\begin{equation}
 \label{(6.2)} 
\Psi(\sigma) \alpha={^{\sigma}}\alpha \Psi'(\sigma).
\end{equation}

Il existe des indices $j$ et $j'$ avec $0\leq j,j'\leq p-1$,  tels que l'on ait 
$$\alpha\in \big\lbrace\alpha_{j,j'}, \alpha_{j,j'} s,\alpha_{j,j'}t\big\rbrace,$$
o\`u $s$ est un $3$-cycle et o\`u $t$ est une transposition.
On est donc amen\'e \`a consid\'erer plusieurs cas. \'Etablissons en d\'etail le r\'esultat  pour l'un d'entre eux, les autres se traitent de la m\^eme fa\c con. Notons $\chi : G_K\to \Aut(\mu_p)=(\Z/p\Z)^*$ le caract\`ere cyclotomique.
\smallskip

Supposons que l'on ait $\alpha=\alpha_{j,j'}$. D'apr\`es les \'egalit\'es~\eqref{(6.1)}, pour tout $\sigma\in G_K$, on a 

$$\Psi(\sigma) \alpha([x,y,z])=\left[ \zeta^j x,\frac{\sigma(\beta)}{\beta}\zeta^{j'}y,\frac{\sigma(\gamma)}{\gamma} z\right]=\left[x,\frac{\sigma(\beta)}{\beta}\zeta^{j'-j}y,\frac{\sigma(\gamma)}{\gamma} \zeta^{-j}z\right],$$

$${^{\sigma}}\alpha \Psi'(\sigma)([x,y,z])=\left[\zeta^{j\chi(\sigma)} x, \frac{\sigma(\beta')}{\beta'} \zeta^{j'\chi(\sigma)}y, \frac{\sigma(\gamma')}{\gamma'} z\right]=\left[x, \frac{\sigma(\beta')}{\beta'} \zeta^{(j'-j)\chi(\sigma)}y, \zeta^{-j\chi(\sigma)}\frac{\sigma(\gamma')}{\gamma'} z\right].$$

Il  r\'esulte alors de \eqref{(6.2)} que l'on a 
$$\frac{\sigma(\beta)}{\beta}\zeta^{j'-j}=\frac{\sigma(\beta')}{\beta'} \zeta^{(j'-j)\chi(\sigma)}    \quad \text{et}\quad \frac{\sigma(\gamma)}{\gamma} \zeta^{-j}= \zeta^{-j\chi(\sigma)}\frac{\sigma(\gamma')}{\gamma'}.$$
On d\'eduit de la premi\`ere \'egalit\'e que l'on a
$$ \sigma\left(\frac{\beta}{\beta'} \zeta^{j-j'}\right)=\frac{\beta}{\beta'} \zeta^{j-j'}.$$
Par suite, on a $\frac{\beta}{\beta'} \zeta^{j-j'}\in K^*$, donc $b/b'\in K^{*p}$. De m\^eme, d'apr\`es la seconde \'egalit\'e, on  a
$$ \sigma\left(\frac{\gamma}{\gamma'}\zeta^j\right)=\frac{\gamma}{\gamma'} \zeta^{j},$$
d'o\`u $\frac{\gamma}{\gamma'}\zeta^j\in K^*$  puis
$c/c'\in K^{*p}$. Ainsi, le couple $(b/b',c/c')$ appartient \`a $K^{*p}\times K^{*p}$ comme annonc\'e. 

Soit $s$  le $3$-cycle d\'efini par $$s([x,y,z]=[z,x,y].$$ Si $\alpha=\alpha_{j,j'} s$  ou  $\alpha=\alpha_{j,j'} s^2$, on v\'erifie    que
$(bc',b'/cc')$  ou $(cb',b/cc')$ est dans $K^{*p}\times K^{*p}$.

Si  $t$ est l'une des transpositions d\'efinies par  
$$t([x,y,z])=[y,x,z],\quad  [x,z,y]\quad \text{ou}\quad  [z,y,x],$$  
on constate respectivement que 
$$(bc'/c,cb'/c'),\quad (b/c',b'/c)\quad \text{ou}\quad (cc',cb'/b)$$
est dans $K^{*p}\times K^{*p}$, d'o\`u le r\'esultat.
\end{proof}

  \end{document}